\theoremstyle{plain}
\newtheorem{theorem}{Theorem}
\newtheorem{lemma}{Lemma}
\theoremstyle{definition}
\theoremstyle{remark}
\newtheorem{remark}{Remark}
\numberwithin{equation}{section}
\newcommand{\e}{\epsilon}
\newcommand{\de}{\delta}
\newcommand{\f}{\theta^{\xi,t}_\delta}
\newcommand{\R}{\mathbb R}
\newcommand{\N}{\mathbb N}
\newcommand{\C}{\mathbb C}
\newcommand{\Rn}{\mathbb R^n}
\newcommand{\Rm}{\mathbb R^{n+1}}
\newcommand{\sS}{\mathcal{S}}
\newcommand{\calP}{\mathcal{P}}
\begin{document}

\title[The Hardy Uncertainty Principle Revisited]{The Hardy Uncertainty Principle Revisited}
%%%%%%%%%%%%%%%%%%
%Author information
%%%%%%%%%%%%%%%%%%%%
\author{M. Cowling}
\address[M. Cowling]{School of Mathematics\\Watson Building\\
University of Birmingham\\Edgbaston\\ Birmingham B15 2TT.}
\email{M.G.Cowling@bham.ac.uk}
\thanks{The second and fifth authors are supported  by MEC grant, MTM2007-62186, the third and fourth authors by NSF grants DMS-0456583 and DMS-0456833 respectively}
%\thanks{}
%%%%%%%%%%%%%%%%%%%%
\author{L. Escauriaza}
\address[L. Escauriaza]{UPV/EHU\\Dpto. de Matem\'aticas\\Apto. 644, 48080 Bilbao, Spain.}
\email{luis.escauriaza@ehu.es}
%\thanks{}
%\thanks{}
%%%%%%%%%%%%%%%%%%%%%%
\author{C. E. Kenig}
\address[C. E. Kenig]{Department of Mathematics\\University of Chicago\\Chicago, Il. 60637 \\USA.}
\email{cek@math.uchicago.edu}
%\thanks{}
%%%%%%%%%%%%%%%%%%%%%%
\author{G. Ponce}
\address[G. Ponce]{Department of Mathematics\\
University of California\\
Santa Barbara, CA 93106\\
USA.}
\email{ponce@math.ucsb.edu}
%\thanks{}
%%%%%%%%%%%%%%%%%%%%%%
\author{L. Vega}
\address[L. Vega]{UPV/EHU\\Dpto. de Matem\'aticas\\Apto. 644, 48080 Bilbao, Spain.}
\email{luis.vega@ehu.es}
%\thanks{}
%%%%%%%%%%%%%%%%%%%%%%
\keywords{Fourier transform, Schr\"odinger evolutions}
\subjclass{Primary: 35B05. Secondary: 35B60}
%\date{}
%\dedicatory{}
%%%%%%%%%%%%%%
\begin{abstract}We give a real-variable proof of the Hardy uncertainty principle. The method is based on energy estimates for evolutions with positive viscosity, convexity properties of free waves with Gaussian decay at two different times, elliptic $L^2$-estimates and the invertibility of the Fourier transform on $L^2(\Rn)$ and $\mathcal S'(\Rn)$.
\end{abstract}
\maketitle
%%%%%%%%%%%%%%%%%%
\begin{section}{Introduction}\label{S: 1}
There are different ways of stating uncertainty principles for the Fourier transform:  a function $f$ and its Fourier
transform
\begin{equation*}
\hat f(\xi)=\frac 1{\sqrt {2\pi}}\int_{\R}e^{-i\xi\cdot x}f(x)\,dx,\  \xi\in\R,
\end{equation*} 
can not be highly concentrated  unless $f$ is zero. Among them one finds  the Hardy uncertainty principle ($A_1$) \cite{Har}  (see also \cite[pp.131]{StSh}),  its extension $(A_2)$ established in \cite{CoPr2} and the Beurling-H\"ormander result (B) in \cite{Hor2}:
\vspace{0.1cm}

$(A_1)$ \emph{If 
$f(x)=O(e^{-x^2/\beta^2})$, $\hat f(\xi)=O(e^{-4\xi^2/\alpha^2})$ and  $1/\alpha\beta>1/4$, then $f\equiv 0$. Also, if $1/\alpha\beta=1/4$, $f$ is a constant multiple of $e^{-x^2/\beta^2}$.}
\vspace{0.1cm}

$(A_2)$ \emph{ If $0<p,\,q\le \infty$, with at least one of them finite,
\begin{equation*}
\|e^{x^2/\beta^2}f\|_{L^p(\R)}+\|e^{4\xi^2/\alpha^2}\widehat f\|_{L^q(\R)}<+\infty,
\end{equation*}
and $1/\alpha\beta\ge  1/4$, then $f\equiv 0$.} 
\vspace{0.1cm}

$(B)$ \emph{ If $f$ is integrable on $\R$ and 
\begin{equation*}
\int_{\R} \int_{\R} |f(x)| |\widehat f(\xi)| e^{ |x\,\xi|}\,dx\,d\xi
\end{equation*}
 is finite, then $f\equiv 0$.}
 \vspace{0.1cm}

There has also been considerable interest in a better understanding of these results and on its extensions to higher dimensions, and to other settings (\cite{bonamie}, \cite{bonamie2}, \cite{M34}, \cite{SiSu} and \cite{psst}). We shall consider some of these questions at the end of section 3.

As far as we know the proofs of these results and its variants use  Complex Analysis: mainly the fact that multiplication of analytic functions is analytic and the Phragm\'en-Lindel\"of principle. 

The above results have equivalent formulations in terms of the unique solution in $C(\R,L^2(\Rn))$ of the Schr\"odinger evolution
\begin{equation}\label{E: IVP}
\begin{cases}
i\partial_tu+\triangle u=0,\ &\text{in}\ \R^{n+1},\\
u(0)=h,\ &\text{in}\ \Rn.
\end{cases}
\end{equation}
via the identity \eqref{E: equivalencia} below. In particular, when $n=1$ $(A_1)$, $(A_2)$ and $(B)$ are equivalent to
\vspace{0.1cm}

$(A_1')$ \emph{ If $u(x,0)=O(e^{-x^2/\beta^2})$, $u(x,T)=O(e^{-x^2/\alpha^2})$, $T>0$ and $T/\alpha \beta>1/4$, then $u\equiv 0$. Also, if $T/\alpha \beta = 1/4$, $u$ has initial data a constant multiple of $e^{-(1/\beta^2+i/4T)x^2}$.} 
\vspace{0.1cm}

 $(A_2')$  \emph{ If $1\le p,\,q\le \infty$, with at least one of them finite,
\begin{equation*}
\|e^{x^2/\beta^2}u(0)\|_{L^p(\R)}+\|e^{x^2/\alpha^2}u(T)\|_{L^q(\R)}<+\infty
\end{equation*}
and $T/\alpha\beta\ge 1/4$, then $u\equiv 0$.}
 \vspace{0.1cm}

 $(B')$ \emph{ If $u(0)$ is integrable and
 \begin{equation*}
 \int_{\R} \int_{\R} |u(x,0)| |u(\xi,T)| e^{ |x\,\xi|/2T}\,dx \,d\xi<+\infty,
 \end{equation*}
 then $u\equiv 0$.}
 \vspace{0.1cm}

Considering these results as a motivation, in \cite{ekpv06}, \cite{ekpv08b}, \cite{ekpv09}, \cite{ekpv09b}, and  \cite{kpv02} sufficient conditions on a  solution $u$ to \eqref{E: 1.111}, the potential $V$ and the behavior of the solution at two different times are sought in order to guarantee that $u$ is identically zero. The ideas developed in \cite{ekpv06} and \cite{ekpv08b}, were necessary in \cite{ekpv09}, to obtain an extension of the $L^2$-versions of the Hardy uncertainty principle in $(A_2)$ or $(A_2')$, with $p=q=2$. This extension is also valid for non-constant coefficient Schr\"odinger evolutions. In particular, it was shown in \cite{ekpv09}  that zero is the only solution $u$ in $C([0,T]),L^2(\Rn))$ to 
\begin{equation}\label{E: 1.111}\partial_tu=i\left(\triangle u+V(x,t)u\right),\ \text{in}\ \Rn\times [0,T],
\end{equation}
which verifies
\begin{equation}\label{E: una condicion}
\|e^{|x|^2/\beta^2}u(0)\|_{L^2(\Rn)}+\|e^{|x|^2/\alpha^2}u(T)\|_{L^2(\Rn)}<+\infty,
\end{equation}
when $n\ge 1$, $T/\alpha\beta > 1/4$, the potential $V$ is  bounded and either, $V(x,t)=V_1(x)+V_2(x,t)$, with $V_1$ real-valued and 
\begin{equation*}
\sup_{[0,T]}\|e^{T^2|x|^2/\left(\alpha t+\beta \left(T-t\right)\right)^2}V_2(t) \|_{L^\infty(\Rn)} < +\infty
\end{equation*}
or $\lim_{R\rightarrow +\infty}\|V\|_{L^1([0,T], L^\infty(\Rn\setminus B_R)}=0$.
\vspace{0.1cm}

The proof of this extension only uses \emph{real-variable}  methods and provides the first proof of an $L^2(\Rn)$ version of the Hardy uncertainty principle (up to the end-point case, $T/\alpha\beta=1/4$), which does not use Complex Analysis techniques. The methods in \cite{ekpv06}, \cite{ekpv08b} and \cite{ekpv09} are based on Carleman inequalities for certain evolutions. More precisely, on the convexity and log-convexity properties associated to the solutions of those evolutions. The Phragm\'en-Lindel\"of principle is replaced by convexity and log-convexity properties of appropriate $L^2$ quantities. Also notice that the product of log-convex functions is log-convex and this can be seen as the substitute to the fact that analyticity is preserved under multiplication.

Clearly, the methods based on Complex Analysis to prove the classical Hardy type uncertainty principles cannot handle or be adapted to establish uncertainty principles for solutions of evolutions with non constant coefficients. On the other hand, the methods in  \cite{ekpv06}, \cite{ekpv08b}, \cite{ekpv09} and \cite{ekpv09b} have shown to be successful with non constant evolutions, though the methods cannot reach, as they are understood up to \cite{ekpv09}, the end-point case for either, the $L^\infty(\Rn)$ or $L^2(\Rn)$ versions of the Hardy uncertainty principle. They correspond respectively to $1/\alpha\beta=1/4$ in Theorems \ref{T: 4}  and \ref{T: el teorema1}  below or to $T/\alpha\beta=1/4$ in $(A')$ and $(B')$ above, with $p=q=2$. Indeed, the following counterexample is given in \cite{ekpv09}:  whenever $T/\alpha\beta=1/4$ and $n\ge 1$, there is a time-dependent complex-valued potential $V$ with
\begin{equation*}
|V(x,t)|\lesssim\frac 1{1+|x|^2},\ \text{in}\ \Rn\times [0,T],
\end{equation*}
and such that \eqref{E: 1.111} has a $C^\infty(\Rn\times [0,T])$ nonzero solution $u$ verifying \eqref{E: una condicion}.  Of course, $u$ also verifies \eqref{E: una condicion} if one replaces the $L^2(\Rn)$-norm in \eqref{E: una condicion} by the $L^\infty(\Rn)$-norm. Hence, the methods in \cite{ekpv09} must be modified if one seeks a \emph{real variable} proof of the end-point case.

In this work we find such a modification of the arguments in \cite{ekpv09} and provide a new proof of either the $L^2(\Rn)$ ($p=2=q$ in $B$ and $B'$) or $L^\infty(\Rn)$ ($A$ and $A'$) versions of the Hardy uncertainty principle. The modification also avoids complex methods. In particular,  we first prove with \emph{real-variable} techniques the following $L^2(\Rn)$ version of the Hardy uncertainty principle.
 
\begin{theorem}\label{T: el teorema1} Assume that $h:\Rn\longrightarrow \R$, $n\ge 1$, verifies
\begin{equation*}
\|e^{|x|^2/\beta^2}h\|_{L^2(\Rn)}+\|e^{4|\xi|^2/\alpha^2}\widehat h\|_{L^2(\Rn)}<+\infty
\end{equation*}
and $1/\alpha\beta\ge  1/4$, then $h\equiv 0$.
\end{theorem}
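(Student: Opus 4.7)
My plan is to translate the Hardy hypothesis into a two-time Gaussian $L^2$-decay condition for the free Schrödinger evolution $u(x,t) = e^{it\Delta}h$, to dispose of the strict range $1/\alpha\beta > 1/4$ with the real-variable $L^2$ machinery already available in \cite{ekpv09}, and to handle the endpoint $1/\alpha\beta = 1/4$ by a new positive-viscosity regularization. Using the identity
\begin{equation*}
u(x,t) = \frac{e^{i|x|^2/(4t)}}{(2it)^{n/2}}\,\widehat{\bigl(e^{i|\cdot|^2/(4t)}h\bigr)}\!\left(\frac{x}{2t}\right),
\end{equation*}
the hypothesis on $\hat h$ converts, at an appropriate time $T>0$, into a Gaussian $L^2$-bound for $u(\cdot,T)$ whose two-time product with the data bound is $\geq 1/4$ exactly when $1/\alpha\beta \geq 1/4$; the hypothesis on $h$ itself gives $\|e^{|x|^2/\beta^2}u(\cdot,0)\|_{L^2}<\infty$. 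The invertibility of the Fourier transform on $L^2(\Rn)$ and $\mathcal S'(\Rn)$ then ensures that $u\equiv 0$ implies $h\equiv 0$.

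For $1/\alpha\beta > 1/4$ strict, the translated problem is exactly a case of the real-variable $L^2$ Hardy statement for free evolutions proved in \cite{ekpv09} (with $V\equiv 0$), and no further work is needed. The endpoint is the new content. For this I would introduce, for each small $\delta>0$, the parabolic-Schrödinger regularization
\begin{equation*}
\partial_t u^\delta = (i+\delta)\Delta u^\delta, \qquad u^\delta(\cdot,0)=h,
\end{equation*}
and use Gaussian-weighted energy identities (the ``energy estimates for evolutions with positive viscosity'' of the abstract) to prove two-time bounds $\|e^{|x|^2/\beta_\delta(t)^2}u^\delta(t)\|_{L^2}^2\leq C_\delta$ at $t\in\{0,T\}$, with an interpolating weight $\beta_\delta(t)$ whose convexity properties mirror those used in \cite{ekpv09}. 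The crucial observation is that the dissipative part $\delta\Delta$ produces a strict improvement in the effective Hardy ratio: for $\delta>0$ that ratio becomes strictly less than $1/4$, so the log-convexity argument for $H_\delta(t) = \|e^{\phi_\delta(\cdot,t)}u^\delta(t)\|_{L^2}^2$ applies away from the endpoint and forces $u^\delta\equiv 0$. Elliptic $L^2$-estimates for the viscous operator then provide the quantitative control needed to pass to $\delta \to 0$ and recover $u\equiv 0$.

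The main obstacle is precisely this last step: obtaining the Gaussian-weighted two-time bounds on $u^\delta$ and ensuring that the $\delta$-perturbation strictly shifts the Hardy ratio in the correct direction, with estimates strong enough to survive $\delta\to 0$. The parabolic part of the regularization means that the condition at $t=T$ (formulated for the pure Schrödinger evolution) is not literally inherited by $u^\delta$; bridging the inviscid and viscous flows via elliptic $L^2$-estimates for $(i+\delta)\Delta$ is where the endpoint gain must be extracted. Once these uniform two-time bounds are in place, the convexity of free waves with Gaussian decay at two times closes the argument, and the invertibility of the Fourier transform returns the conclusion to the original statement.
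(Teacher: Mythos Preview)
Your reduction to the free Schr\"odinger evolution and the disposal of the strict case $1/\alpha\beta>1/4$ via \cite{ekpv09} match the paper. The endpoint argument, however, has a genuine gap.

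The viscous perturbation moves the Hardy ratio in the wrong direction. Since the heat and Schr\"odinger semigroups commute, $u^\delta(\cdot,T)=e^{\delta T\triangle}u(\cdot,T)$; convolving a Gaussian of width $\alpha$ with the heat kernel yields a broader Gaussian, so the effective decay exponent at time $T$ satisfies $\alpha_\delta^2=\alpha^2+4\delta T>\alpha^2$ and hence $T/(\alpha_\delta\beta)<T/(\alpha\beta)=1/4$. That is the \emph{non-uniqueness} regime, not the strict-uniqueness regime, so your sentence ``the ratio becomes strictly less than $1/4$, so \dots\ forces $u^\delta\equiv 0$'' is backwards. There is also a logical short-circuit: if any single $\delta>0$ gave $u^\delta\equiv 0$, then $h=u^\delta(0)=0$ immediately and no limit $\delta\to 0$ would be needed; this tautology is a symptom that the endpoint gain cannot be extracted from viscosity alone. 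The paper in fact remarks that the \cite{ekpv09} machinery (which already uses positive-viscosity approximations internally) cannot reach the endpoint, and exhibits a bounded potential for which a nonzero solution satisfies the endpoint two-time bound --- so a genuinely new ingredient is required.

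What the paper does instead is this. At the endpoint (normalized to $\alpha=\beta=2$, $T=1$) one applies Lemma~\ref{L: lamejora} with $R=1$ to the free wave itself, obtaining the global-in-time improvement
\[
\sup_{t\in\R}\|e^{|x|^2/4(1+t^2)}u(t)\|<+\infty.
\]
One then divides by the exact borderline Gaussian free wave $G(x,t)=(t-i)^{-n/2}e^{-|x|^2/4i(t-i)}$, sets $g=u/G$, and takes the spatial Fourier transform $\phi=\widehat g$. The function $\phi$ solves a first-order (degenerate elliptic) equation in $(\xi,t)$ and satisfies $\sup_t (1+t^2)^{-n/4}\|\phi(t)\|<\infty$; Theorem~\ref{T: Theroem 3} is a Liouville theorem for this equation, proved by a commutator/log-convexity identity depending on a free parameter $\lambda\in\Rn$ which is sent to infinity. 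The ``positive viscosity'' and ``elliptic $L^2$-estimates'' in the abstract refer respectively to the proof of Lemma~\ref{L: lamejora} (already in \cite{ekpv08b,ekpv09}) and to the mollification/regularity step justifying the formal log-convexity computation in Theorem~\ref{T: Theroem 3}, not to an endpoint-breaking viscosity scheme of the type you outline.
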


\noindent Then, we prove Theorem \ref{T: 4}, the classical Hardy uncertainty principle, with \emph{real-variable} methods and as a Corollary of Theorem \ref{T: el teorema1} for $n=1$.
\begin{theorem}\label{T: 4}
Assume that  $n\ge 1$, $1/\alpha\beta>1/4$ and $h:\Rn\longrightarrow \R$ verifies
\begin{equation*}
\|e^{|x|^2/\beta^2}h\|_{L^\infty(\Rn)}+\|e^{4|\xi|^2/\alpha^2}\widehat h\|_{L^\infty(\Rn)}<+\infty,
\end{equation*}
then $h\equiv 0$. Also, if $1/\alpha\beta=1/4$, $h$  is a constant multiple of $e^{-|x|^2/\beta^2}$.
\end{theorem}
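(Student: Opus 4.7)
The natural route is to deduce Theorem~\ref{T: 4} from Theorem~\ref{T: el teorema1} in $n=1$ via a dimension reduction together with an $L^\infty\to L^2$ trade-off. For $\omega\in\Sn$ I would form the Radon-type slice
\[
h_\omega(s)=\int_{\omega^\perp}h(s\omega+y)\,dy,
\]
for which the $L^\infty$ Gaussian decay of $h$ yields $|h_\omega(s)|\le C_nA\beta^{n-1}e^{-s^2/\beta^2}$; the projection-slice identity $\widehat{h_\omega}(\tau)=(2\pi)^{(n-1)/2}\hat h(\tau\omega)$ gives the corresponding Fourier-side bound with the same $\alpha$. So $h_\omega$ satisfies the $n=1$ hypotheses with the same pair $(\alpha,\beta)$, and injectivity of the Radon transform on this class of Gaussian-decaying functions propagates the 1D conclusion back to $h$ (in particular the Gaussian extremiser in one variable is the Radon transform of a radial Gaussian in $n$ variables). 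In the strict 1D regime $\alpha\beta<4$ the reduction to Theorem~\ref{T: el teorema1} is elementary: pick $\tilde\alpha>\alpha$, $\tilde\beta>\beta$ with $\tilde\alpha\tilde\beta\le 4$; the bound
\[
\int_\R e^{2x^2/\tilde\beta^2}|h(x)|^2\,dx\le A^2\int_\R e^{-2x^2(\beta^{-2}-\tilde\beta^{-2})}\,dx<\infty,
\]
together with its Fourier-side analogue, places $h$ in the hypothesis of Theorem~\ref{T: el teorema1} at the parameters $(\tilde\alpha,\tilde\beta)$, forcing $h\equiv 0$.

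The endpoint case $\alpha\beta=4$ in $n=1$ is the crux, since it must characterise the extremiser $\phi(x):=e^{-x^2/\beta^2}$. The difficulty is that replacing $h$ by $h-c\phi$ preserves the $L^\infty$ endpoint hypothesis for every $c$, while
\[
\|e^{x^2/\beta^2}(h-c\phi)\|_{L^2}^2=\int_\R\bigl|e^{x^2/\beta^2}h(x)-c\bigr|^2\,dx
\]
is infinite unless $e^{x^2/\beta^2}h(x)\to c$ at $\pm\infty$ strongly enough to leave an $L^2$ remainder. The real work is therefore to manufacture the constant $c$. I would use the identity valid at the endpoint,
\[
e^{x^2/\beta^2}h(x)=\frac{1}{\sqrt{2\pi}}\int_\R g(\xi)\,e^{-(4/\alpha^2)(\xi-i\alpha^2 x/8)^2}\,d\xi,\qquad g(\xi):=e^{4\xi^2/\alpha^2}\hat h(\xi)\in L^\infty(\R),
\]
obtained by completing the square in Fourier inversion (using $\alpha^2/16=1/\beta^2$ at the endpoint), combined with the positive-viscosity log-convexity of Gaussian-weighted $L^2$ quantities along the free Schr\"odinger evolution of $h$---the machinery of Section~2 of the paper that underlies Theorem~\ref{T: el teorema1}---to extract a limit $c$ of $e^{x^2/\beta^2}h(x)$ at infinity in an integrated sense. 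Once such a $c$ is produced so that $e^{x^2/\beta^2}(h-c\phi)\in L^2(\R)$ with the corresponding Fourier-side bound, the endpoint conclusion of Theorem~\ref{T: el teorema1} forces $h-c\phi\equiv 0$.

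The main obstacle is precisely this manufacture of $c$: classically it is delivered by Liouville applied to the entire function $\hat h(z)e^{z^2\beta^2/4}$, and a genuine real-variable substitute seems to require the elliptic $L^2$-estimates and the heat/Schr\"odinger log-convexity framework developed in \cite{ekpv06,ekpv08b,ekpv09}, revisited here to reach the endpoint that those earlier works could not. Everything else (the slicing, the $L^\infty\to L^2$ trade-off, the application of Theorem~\ref{T: el teorema1}) is routine.
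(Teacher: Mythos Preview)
Your reduction of the strict case $1/\alpha\beta>1/4$ to Theorem~\ref{T: el teorema1} by slackening the parameters is correct and standard. The Radon-slice reduction to $n=1$ is also viable (the paper itself mentions this route, citing \cite{SiSu}), although the paper actually uses a simpler separation of variables via partial Fourier transforms $h_\eta(x)=(2\pi)^{-1/2}\int e^{-iy\eta}h(x,y)\,dy$, which makes the identification of the extremiser in higher dimensions more transparent than your claim that ``injectivity of the Radon transform \dots propagates the 1D conclusion back''.

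The genuine gap is your treatment of the one-dimensional endpoint $\alpha\beta=4$. You are looking for the constant $c$ in the wrong place. There is no need to extract $c$ as a limit of $e^{x^2/\beta^2}h(x)$ at infinity, nor to invoke the log-convexity machinery again; the right choice is simply $c=h(0)$. The paper's argument (with $\alpha=\beta=2$) is: set $g(x)=h(x)-h(0)e^{-x^2/4}$, so that $g$ is smooth, satisfies the same $L^\infty$ bounds, and vanishes at the origin. Now \emph{divide by $x$}: the function $f(x)=g(x)/x$ is continuous on $[-1,1]$ and satisfies $|f(x)|\lesssim |x|^{-1}e^{-x^2/4}$, whence $e^{x^2/4}f\in L^2(\R)$. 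On the Fourier side, $\widehat f$ is a multiple of $\sgn\ast\widehat g$; using $g(0)=0$, i.e.\ $\int\widehat g=0$, one gets for $\xi<0$
\[
\int_\R\sgn(\xi-\eta)\,\widehat g(\eta)\,d\eta=2\int_{\eta<\xi}\widehat g(\eta)\,d\eta,
\qquad
\Bigl|\int_{\eta<\xi}\widehat g(\eta)\,d\eta\Bigr|\lesssim\int_{\eta<\xi}e^{-\eta^2}\,d\eta\lesssim|\xi|^{-1}e^{-\xi^2},
\]
and similarly for $\xi>0$, so $e^{\xi^2}\widehat f\in L^2(\R)$ as well. Thus $f$ satisfies the hypotheses of Theorem~\ref{T: el teorema1} \emph{at the same endpoint parameters}, forcing $f\equiv0$, hence $g\equiv0$ and $h=h(0)e^{-x^2/4}$.

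The point you missed is that one should not try to place $h-c\phi$ itself in the weighted $L^2$ space; the division by $x$ manufactures exactly the extra $|x|^{-1}$ decay needed to pass from $L^\infty$ to $L^2$ with no loss in the Gaussian exponent, and the only obstruction to dividing by $x$ --- a nonzero value at the origin --- dictates $c=h(0)$. This replaces your proposed appeal to elliptic estimates and log-convexity by an elementary computation.
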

Our  proof uses Theorem 3 in \cite{ekpv09} (See Lemma \ref{L: lamejora} below for the version we need here). It is related to the interior \emph{improvement} of the Gaussian decay of a free wave which has Gaussian decay at two different times, and proved with \emph{real-variable} methods in  \cite{ekpv09}, see also \cite{ekpv08a}. 

The outline of our proof is as follows. When $h$ satisfies the hypothesis in Theorem \ref{T: el teorema1}, we may assume $\alpha=\beta=2$ and if $u$ is the free wave verifying \eqref {E: IVP}, Theorem 3 in  \cite{ekpv09} (see also Lemma \ref{L: lamejora} below) implies that
\begin{equation}\label{E: la segunda}
\sup_{\R}\|e^{\frac{|x|^2}{4(1+t^2)}}u(t)\|_{L^2(\Rn)}<+\infty.
\end{equation}
Then, we define $G(x,t)=(t-i)^{-\frac n2}e^{-\frac{|x|^2}{4i(t-i)}}$, the free wave whose precise Gaussian decay is 
\begin{equation*}
\left(1+t^2\right)^{-n/4}\, e^{-|x|^2/4(1+t^2)}.
\end{equation*} 
Set $g=u/G$ and observe that $\phi(\xi,t)=\widehat g(\xi,t)$, the Fourier transform of $g(t)$, verifies
\begin{equation}\label{E: otra ecuacion}
\partial_t\phi-\frac \xi{t-i}\,\cdot\nabla\phi-\frac n{t-i}\, \phi+i|\xi|^2\phi=0,\ \text{in}\ \Rm
\end{equation}
and
\begin{equation}\label{E: otra condici—n}
\sup_{\R}\frac{\ \ \ \|\phi(t)\|_{L^2(\Rn)}}{(1+t^2)^{\frac n4}}<+\infty\,.
\end{equation}
Then, we use $L^2$ elliptic estimates to justify the calculations leading to a certain log-convexity property of solutions to \eqref{E: otra ecuacion}. This convexity property implies the Liouville result in Theorem \ref{T: Theroem 3} below and Theorem \ref{T: el teorema1} follows.
\begin{theorem}\label{T: Theroem 3}
Zero is the only weak solution to  \eqref{E: otra ecuacion} verifying \eqref{E: otra condici—n}.
\end{theorem}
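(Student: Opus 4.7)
The plan is to follow the log-convexity strategy pioneered in the authors' earlier work, now adapted to the complex-coefficient first-order equation \eqref{E: otra ecuacion}. First, I would simplify via the change of unknown $\phi = (t-i)^n\psi$, which absorbs the Gaussian amplitude of $G$ and reduces \eqref{E: otra ecuacion} to the cleaner form
\begin{equation*}
\partial_t\psi - \frac{\xi}{t-i}\cdot\nabla\psi + i|\xi|^2\psi = 0,
\end{equation*}
while the hypothesis on $\phi$ translates into the uniform bound $\|\psi(t)\|_{L^2(\Rn)} \le C(1+t^2)^{-n/4}$. It thus suffices to show that any such weak solution $\psi$ is identically zero.

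Next, I would upgrade the regularity of $\psi$. Since $1/(t-i)=(t+i)/(1+t^2)$ has a nontrivial imaginary part, the drift $\xi/(t-i)$ points in a genuinely complex direction; separating real and imaginary parts of the equation reveals a $\bar\partial$-type overdetermined first-order system in $(t,\xi)$, for which standard $L^2$-elliptic interior estimates apply. These upgrade any $L^2$-weak solution to a locally smooth function with controlled polynomial moments in $\xi$, which is enough to justify every integration by parts that follows.

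With that in hand, I would decompose the generator $\mathcal{L}_t$ of the evolution as $\mathcal{S}_t + \mathcal{A}_t$, with $\mathcal{S}_t^*=\mathcal{S}_t$ and $\mathcal{A}_t^*=-\mathcal{A}_t$ on $L^2(\Rn)$. A direct calculation shows that only one commutator survives and that
\begin{equation*}
[\mathcal{S}_t,\mathcal{A}_t] = \frac{2|\xi|^2}{1+t^2}
\end{equation*}
is a non-negative multiplication operator. Setting $H(t)=\|\psi(t)\|_{L^2}^2$, the standard identity
\begin{equation*}
H''(t)H(t) - H'(t)^2 \ge 2H(t)\,\langle(\partial_t\mathcal{S}_t+[\mathcal{S}_t,\mathcal{A}_t])\psi,\psi\rangle
\end{equation*}
then yields the log-convexity of $\log H(t)$ on $\mathbb{R}$, possibly after multiplying $H$ by a polynomial weight in $t$ designed to absorb the $\partial_t\mathcal{S}_t$ contribution. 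Since $H$ is bounded above and $\log H$ is convex on the whole real line, $\log H$ must be constant; tracking the equality cases in both Cauchy--Schwarz and the commutator inequality then forces $|\xi|\psi\equiv 0$ in $L^2$, and therefore $\psi\equiv 0$ and $\phi\equiv 0$.

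The main obstacle I anticipate is handling the full quantity $\partial_t\mathcal{S}_t+[\mathcal{S}_t,\mathcal{A}_t]$: the time derivative of $\mathcal{S}_t$ contributes a cross term of the form $\langle i(\xi\cdot\nabla + n/2)\psi,\psi\rangle$ with no a priori sign. To push the inequality through one has to exploit both the slack in the Cauchy--Schwarz step $(H'(t))^2 \le 4\|\mathcal{S}_t\psi\|^2 H(t)$, which produces an additional $\|B\psi\|^2 H - \langle iB\psi,\psi\rangle^2$ term (with $B=\xi\cdot\nabla+n/2$), and the $|\xi|^2/(1+t^2)$ gain from the commutator. Carrying out this absorption rigorously, rather than just formally, is precisely where the interior elliptic estimates of the second step become indispensable.
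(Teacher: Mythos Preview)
Your framework (symmetric/antisymmetric splitting, commutator calculus, log-convexity, an elliptic $L^2$-estimate to justify the integrations by parts) is exactly the one the paper uses, and your commutator $[\mathcal S_t,\mathcal A_t]=2|\xi|^2/(1+t^2)$ is correct. But the endgame you sketch does not close, and the paper's proof differs from yours at precisely this point.

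The difficulty is the variable in which the log-convexity holds. Working with your $\psi$, one finds after the cancellations that
\[
\partial_t\bigl[(1+t^2)\,\partial_t\log H(t)\bigr]\ \ge\ -n+\frac{4\||\xi|\psi(t)\|^2}{\|\psi(t)\|^2},
\]
so that $\log\bigl((1+t^2)^{n/2}H(t)\bigr)$ is convex in the variable $s=\arctan t\in(-\tfrac\pi2,\tfrac\pi2)$, not in $t$. On this \emph{bounded} interval, a convex function that is merely bounded above need not be constant (or $-\infty$), so nothing follows. A polynomial weight in $t$ cannot repair this: the factor $1+t^2$ in front of the second derivative is intrinsic (it is the conformal factor $dt/ds$), and no choice of $w(t)$ of polynomial type makes $\log\bigl(w(t)H(t)\bigr)$ convex in $t$ on all of $\R$ from the inequality above. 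Thus the step ``$\log H$ convex on $\R$ and bounded $\Rightarrow$ constant $\Rightarrow$ equality cases force $|\xi|\psi=0$'' has a genuine gap.

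The paper's way out is to conjugate not by $(t-i)^{-n}$ alone but by the weight
\[
e^{\lambda\cdot\xi-\frac{|\xi|^2(1+t^2)}{2}},\qquad \lambda\in\Rn\ \text{a free parameter},
\]
and to work with $f=e^{\lambda\cdot\xi-|\xi|^2(1+t^2)/2}\phi$. The time-dependent Gaussian is chosen precisely so that the potential $i|\xi|^2$ drops out of the equation for $f$; the resulting commutator identity is then \emph{exact},
\[
(1+t^2)\,\mathcal S_t+(1+t^2)\,[\mathcal S,\mathcal A]+2t\,\mathcal S=\tfrac n2,
\]
and gives convexity of $\log\bigl((1+t^2)^{-n/2}\|f(t)\|^2\bigr)$ in $\arctan t$, still on the bounded interval. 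The crucial extra leverage is the parameter $\lambda$: completing the square shows
\[
\frac{\|f(\pm T)\|^2}{(1+T^2)^{n/2}}\le e^{|\lambda|^2/(1+T^2)}\sup_{\R}\frac{\|\phi(t)\|^2}{(1+t^2)^{n/2}},
\]
so as $T\to\infty$ the endpoint values become $\lambda$-independent, while the interior value $\|e^{\lambda\cdot\xi-|\xi|^2/2}\phi(0)\|$ is forced to stay bounded uniformly in $\lambda$. Letting $|\lambda|\to\infty$ then yields $\phi\equiv0$. In short, the missing ingredient in your plan is a Carleman parameter; log-convexity of $\|\psi(t)\|^2$ alone, in the natural variable, cannot distinguish $\phi$ from a genuine nontrivial bounded profile on a finite $s$-interval.
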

Altogether, our proof uses energy estimates for evolutions with positive viscosity \cite[Lemma 1]{ekpv08b}, the Gaussian convexity properties and the improvement of the Gaussian decay of free waves with Gaussian decay at two different times obtained in \cite[Lemma 3]{ekpv08b}, and \cite[Theorem 3]{ekpv09} respectively (see also Lemma \ref{L: lamejora} below), $L^2$ elliptic estimates, and the invertibility of the Fourier transform on $L^2(\Rn)$ and in the class of tempered distribution $\mathcal S'(\Rn)$.

We also use the formula
\begin{equation}\label{E: la formulita}
\begin{aligned}
u(x,t)=(2\pi)^{-\frac n2}\int_\R &e^{ix\xi-i\xi^2t}\widehat h(\xi)\,d\xi\\&=(4\pi |t|)^{-\frac n2}e^{\pi i n\,\text{sgn}\,t/2}\int_\R e^{-|x-y|^2/4it}h(y)\,dy,
\end{aligned}
\end{equation}
for the solution $u$ to \eqref{E: IVP}, where
 \begin{equation*}
\ \hat h(\xi)=(2\pi)^{-\frac n2}\int_{\R}e^{-i\xi\cdot x}h(x)\,dx,\ \xi\in\Rn,
\end{equation*}
is the Fourier transform of $h$. Expanding the square in the second integral in \eqref{E: la formulita}, $u$ can be written  as
\begin{equation}\label{E: equivalencia}
u(x,t)
= (2  |t|)^{-\frac n2}e^{\pi in\,\text{sgn}\,t/2}e^{ix^2/4t}\,
\widehat{e^{\frac{i|\,\cdot\,|^2}{4t}}h}\left(\frac{x}{2 t}\right).
\end{equation}
Notice that in the above identities we use that 
\begin{equation*}
\int_{-\infty}^{+\infty}\, e^{\pi ix^2}\,dx=e^{\pi i/4}.
\end{equation*}
We recall that this formula and the invertibility of the Fourier transform  on $L^2(\Rn)$ and in the class of tempered distribution $\mathcal S'(\Rn)$ can be established with real-variable methods. Finally, we also use in section \ref{S: 3} that the Fourier transform of  the principal value distribution
\begin{equation*}
\left(\text{p.v.}\, \tfrac 1x\, , \varphi\right)=\lim_{\e\rightarrow 0}\int_{|x|> \e}\tfrac{\varphi(x)} x\,dx,\quad \varphi\in C_0^\infty(\R)
\end{equation*}
and $e^{-x^2/4}$ are respectively constant multiples of the sign function and $e^{-\xi^2}$. We remark that the latter and the second identity in \eqref{E: la formulita} can all be obtained with real variable methods.

Though our proof of Theorems \ref{T: el teorema1} does not use \emph{analytic functions},  we note that the inspiration for the convexity arguments used to prove Theorem \ref{T: Theroem 3} comes from the following formal fact: 

\vspace{0.1cm}
\emph{Assume that $\phi$ verifies the conditions in Theorem \ref{T: Theroem 3} for $n=1$ and set
\begin{equation*}
J(z)=\frac{x}{x+iy}\,e^{-x^2-ixy}\, \phi(x,y/x),\ \text{for}\ z=x+iy,\  x\neq 0\  \text{and}\ y\in\R,
\end{equation*}
then, $J$ is analytic in the open right half-plane and
\begin{equation*}
\sup_{-\frac\pi 2<\theta <\frac\pi 2}\int_0^{+\infty}e^{2x^2}|J(re^{i\theta})|^2\,dr<+\infty.
\end{equation*}}

That such an analytic function is zero can be established using Carleman inequalities for the Cauchy-Riemann operator
\begin{equation*}
\partial_{\overline z}=\tfrac 12\left(\partial_x+i\partial_y\right),
\end{equation*}
and the standard proof of these Carleman inequalities relies on the following fact:

\vspace{0.2cm}
\emph{For a given smooth function $\varphi: \Omega\subset\R^2\longrightarrow\R$, write 
\begin{equation*}
e^{\varphi}\,\partial_{\overline z}\,e^{-\varphi}=\mathcal S+\mathcal A,
\end{equation*}
where $\mathcal S$ and $\mathcal A$ are symmetric and skew-symmetric operators on $C_0^\infty(\Omega)$. Then,  $[\mathcal S,\mathcal A]$, the commutator of $\mathcal S$ and $\mathcal A$, verifies
\begin{equation*}
[\mathcal S,\mathcal A]=\tfrac 14\,\triangle\varphi.
\end{equation*}}
Our proof consists in carrying out  these ideas in the original coordinates $(\xi,t)$ of $\phi$.
\end{section}

\begin{section}{A real-variable approach}\label{S: 2}
In the sequel $N_{A,\,\dots}$ denotes a constant depending on the variable $A$ and the other posible variables in the subscript.  $\|f\|$ denotes the $L^2$-norm of $f$ over the Euclidean space where it is defined.

In the proof of Theorem \ref{T: el teorema1} we need Lemmas \ref{L: lamejora} and \ref{L: freq1} below. The first follows from  \cite[Theorem 3]{ekpv09} and the second from \cite[Lemmas 1 and 2]{ekpv09}.

\begin{lemma}\label{L: lamejora}
Assume that $u$ in $C([0,T], L^2(\Rn))$ verifies
\begin{equation*}
\partial_tu+i\triangle u=0,\ \text{in}\ \Rn\times [0,T],
\end{equation*}
$n\ge 1$, $\alpha$ and $\beta$ are positive and $T/\alpha\beta\le 1/4$. Then,

\begin{equation*}
\sup_{[0,T]}\|e^{a(t)|x|^2}u(t)\|
\le\|e^{|x|^2/\beta^2}u(0)\|+ \|e^{|x|^2/\alpha^2}u(T)\|,
\end{equation*}
where
\[a(t)=\frac {\alpha\beta RT}{2\left(\alpha t+\beta (T-t)\right)^2+2R^2\left(\alpha t - \beta (T-t)\right)^2}\]
and $R$ is the smallest root of the equation 
\begin{equation*}
\frac T{\alpha\beta}=\frac R{2\left(1+R^2\right)}\, .
\end{equation*}
\end{lemma}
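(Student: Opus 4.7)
The plan is to obtain the lemma as a direct consequence of Theorem~3 in \cite{ekpv09}, which establishes a log-convex interpolation of Gaussian-weighted $L^2$-norms for solutions of a Schr\"odinger equation with bounded potential; our situation is the potential-free specialization. That theorem, proved by purely real-variable methods, applies exactly in the range $T/\alpha\beta\le 1/4$, and the interior weight $a(t)|x|^2$ arises naturally from the optimization inside its proof.

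Concretely, I would first verify the endpoint values of $a(t)$. Substituting $t=0$ and $t=T$ into the displayed rational expression and using the defining relation $T/\alpha\beta=R/(2(1+R^2))$, equivalently the quadratic $2TR^2-\alpha\beta R+2T=0$, gives $a(0)=1/\beta^2$ and $a(T)=1/\alpha^2$, matching the two hypotheses. Then Theorem~3 of \cite{ekpv09} supplies the log-convex interpolation
\begin{equation*}
\|e^{a(t)|x|^2}u(t)\|\le\|e^{|x|^2/\beta^2}u(0)\|^{1-t/T}\,\|e^{|x|^2/\alpha^2}u(T)\|^{t/T},\qquad t\in[0,T],
\end{equation*}
and the elementary inequality $x^{1-\theta}y^\theta\le x+y$, valid for $x,y\ge 0$ and $\theta\in[0,1]$, yields the claimed supremum bound.

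The real work, carried out in \cite{ekpv09} rather than here, is the derivation of the log-convex interpolation itself. That argument adds a positive viscosity $\varepsilon\triangle u$ to the Schr\"odinger equation (as in \cite[Lemma 1]{ekpv08b}) to regularize the formal computations, uses the Gaussian-convexity identity of \cite[Lemma 3]{ekpv08b} to identify a one-parameter family of admissible interior weights satisfying a Riccati-type ODE in $t$, optimizes over that family subject to the prescribed boundary values, and passes to the limit $\varepsilon\to 0^+$. The rational expression stated in the lemma, with $R$ the smallest root of the compatibility quadratic, is the unique admissible solution of that ODE; the threshold $T/\alpha\beta\le 1/4$ is precisely the condition for the quadratic to have real roots, which accounts for the sharpness of the hypothesis. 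The main obstacle is thus the matching of the explicit formula for $a(t)$ with the ODE produced by the Carleman scheme, a direct but somewhat delicate computation; granting this matching, the interpolation bound and the concluding AM--GM step complete the proof.
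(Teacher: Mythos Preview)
Your proposal is correct and matches the paper's own treatment: the paper does not prove this lemma independently but simply states that it ``follows from \cite[Theorem 3]{ekpv09}''. Your additional verification of the endpoint values $a(0)=1/\beta^2$, $a(T)=1/\alpha^2$ and the passage from the log-convex product bound to the sum bound via $x^{1-\theta}y^\theta\le x+y$ are accurate and make explicit what the paper leaves implicit.
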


\begin{remark}\label{R:2} $1/a(t)$ is convex and attains its minimum value in the interior of $[0,T]$, when
\begin{equation}\label{E: segunda condi}
|\alpha-\beta|<R^2\left(\alpha+\beta\right).
\end{equation}
Thus, both $u(0)$ and $u(T)$ are generated by waves with faster Gaussian decay in $(0,T)$, when \eqref{E: segunda condi} holds.
\end{remark}

Lemma \ref{L: freq1} is used to justify the validity of a formal log-convexity property of solutions to \eqref{E: otra ecuacion}
.
\begin{lemma}\label{L: freq1}
Let $\lambda\in\Rn$, $T>0$, $0<\e\le 1$ and $f=f(\xi,t)$ in $C^\infty(\R,\mathcal S(\Rn))$ verify
\begin{equation*}
\partial_tf-\frac\xi{t-i}\cdot\nabla f-\frac{n-\lambda\cdot\xi}{t-i}\,f=F ,\ \text{in}\ \Rm.
\end{equation*}
Set $H(t)=\|f(t)\|^2$. Then,
\begin{equation}\label{E: una propiedad de convexidad logar'tmica}
\frac{H(t)+\e}{\,\left(1+t^2\right)^{\frac n2}}\le \left(\frac{H(-T)+\e}{\,\left(1+T^2\right)^{\frac n2}}\right)^{\theta(t,T)}\left(\frac{H(T)+\e}{\,\left(1+T^2\right)^{\frac n2}}\right)^{1-\theta(t,T)}e^{N_{T,\e}},
\end{equation}
when $-T\le t\le T$, 
\begin{equation*}
\theta(t,T)=\frac{\arctan T-\arctan t}{2\arctan T}\ \ \text{and}\ \ N_{T,\e}\le \e+\tfrac T{\e^2}\|F\|_{L^2(\R\times [-T,T])}^2.
\end{equation*}
\end{lemma}
\begin{proof}[Proof of Lemma \ref{L: freq1}]
The proof is based on the following facts:
\begin{equation}\label{E: simetrico y antisimetrico}
\begin{aligned}
&\frac\xi{t-i}\cdot\nabla+\frac{n-\lambda\cdot\xi}{t-i}=\mathcal S+\mathcal A,\\
&\mathcal S=\frac i{1+t^2}\left(\xi\cdot\nabla+\frac n2\right)-\frac{t\,\lambda\cdot\xi}{1+t^2}+\frac{nt}{2(1+t^2)}\, ,\\
&\mathcal A=\frac t{1+t^2}\left(\xi\cdot\nabla +\frac n2\right)-\frac{i\lambda\cdot\xi }{1+t^2}+\frac{in}{2(1+t^2)}\ ,
\end{aligned}
\end{equation}
$\mathcal S$ and $\mathcal A$ are respectively symmetric and skew-symmetric operators on $\mathcal S(\R)$,
\begin{equation}
\begin{aligned}\label{E: otrassuperformuala}
&\left[\mathcal S,\mathcal A\right]=\frac{\lambda\cdot\xi}{1+t^2}\\
&\mathcal S_t=-\frac{2it}{(1+t^2)^2}\left(\xi\cdot\nabla+\frac n2\right)-\frac{(1-t^2)\lambda\cdot\xi}{\,\,(1+t^2)^2}+\frac{n(1-t^2)}{\,\,\, 2(1+t^2)^2},\\
&\mathcal S_t+[\mathcal S,\mathcal A]=-\frac{2it}{(1+t^2)^2}\left(\xi\cdot\nabla+\frac n2\right)+\frac{2t^2\,\lambda\cdot\xi}{(1+t^2)^2}+\frac{n(1-t^2)}{\,\,\,2(1+t^2)^2}\, ,\
\end{aligned}
\end{equation}
where $\left[\mathcal S,\mathcal A\right]$ and $\mathcal S_t$ denote respectively the space-commutator of $\mathcal S$ and $\mathcal A$ and the time-derivative operator of $\mathcal S$. Moreover,
\begin{equation}\label{E: la cuarta}
(1+t^2)\,\mathcal S_t+(1+t^2)\, [\mathcal S,\mathcal A]+2t\,\mathcal S=\frac n2\,,
\end{equation}
and Lemma \ref{L: freq1} follows from \eqref{E: simetrico y antisimetrico}, \eqref{E: otrassuperformuala}, \eqref{E: la cuarta} and Lemmas 1 and 2 in \cite{ekpv09} with $\psi\equiv - n/2$.
\end{proof}
\begin{remark}\label{R:1} The main idea behind  Lemma \ref{L: freq1} is that \eqref{E: la cuarta} implies that
\begin{equation}\label{E: loglog}
\partial_t\left(\left(1+t^2\right)\,\partial_t\log{\left(H+\e\right)}\right)\\\ge n,
\end{equation}
when $\e>0$ and $F\equiv 0$ and \eqref{E: una propiedad de convexidad logar'tmica} is the log-convexity property associated to \eqref{E: loglog}.
\end{remark}

\begin{proof}[Proof of Theorem \ref{T: el teorema1}]As we already said the case $1/(\alpha\beta)>1/4$ was proved in \cite{ekpv09} by real variable methods. For the remaining case we can assume by rescaling that $\alpha=\beta=2$ and
\begin{equation}\label{E: simplificacion}
\|e^{|x|^2/4}h\|+\|e^{|\xi|^2}\,\widehat h\|<+\infty\, .
\end{equation}
Let $u$ be the solution to \eqref{E: IVP}. From  \eqref{E: simplificacion}, $u$ is in $C^\infty(\R,H^\infty(\Rn))$. Define,
\begin{equation}\label{E: la transformacion}
v(x,t)=\left(it\right)^{-\frac n2}e^{-x^2/4it}\overline{u}(x/t, 1/t -1),\ \text{when}\ 0\le t\le 1.
\end{equation}
The first formula for $u$ in  \eqref{E: la formulita}  and \eqref{E: la transformacion} give
\begin{equation*}
v(x,t)=\left(4\pi i t\right)^{-\frac n2}\int_\R e^{-|x-\xi|^2/4it}\,2^{-\frac n2}e^{-i\xi^2/4}\,\overline{\widehat h}(\xi/ 2)\,d\xi\, .
\end{equation*} 
Thus, $v$ verifies
\begin{equation*}
\begin{cases}
i\partial_tv+\triangle v=0,\ &\text{in}\ \Rn\times (0,+\infty),\\
v(x,0)=2^{-\frac n2}e^{-i|x|^2/4}\,\overline{\widehat h}(x/2),\ &\text{in}\ \Rn,
\end{cases}
\end{equation*}
it is in $C^\infty(\R,H^\infty(\Rn))$ and $v(x,1)=i^{-\frac n2}e^{-x^2/4i}\,\overline h(x)$. These facts, and \eqref{E: simplificacion} show that
\begin{equation*}
\|e^{|x|^2/4}v(0)\|+\|e^{|x|^2/4}v(1)\|<+\infty.
\end{equation*}
From Lemma \ref{L: lamejora} with $\alpha=\beta=2$ and $T=1$, 
\begin{equation*}
\sup_{[0,1]} \|e^{\frac{|x|^2}{4\left(1-2t(1-t)\right)}}v(t)\|<+\infty
\end{equation*}
and undoing the changes of variables in \eqref{E: la transformacion},
\begin{equation*}
\sup_{[0,+\infty)}\|e^{\frac{|x|^2}{4(1+t^2)}}u(t)\|<+\infty\, .
\end{equation*}
Applying the same reasoning to $\overline u(-t)$, we get
\begin{equation*}%\label{E: la segunda}
\sup_{\R}\|e^{\frac{|x|^2}{4(1+t^2)}}u(t)\|<+\infty\, .
\end{equation*}
Define,
\begin{equation*}
G(x,t)=(t-i)^{-\frac n2}e^{-\frac {|x|^2}{4i(t-i)}}=(t-i)^{-\frac n2}e^{-\frac{(1-it)}{4(t^2+1)}|x|^2}\  \text{and}\  g=u/G.
\end{equation*}
 Then,
\begin{equation*}
\partial_tg-i\triangle g+\frac x{t-i}\cdot\nabla g=0,\ \text{in}\ \Rm\ ,\ \sup_{\R}\frac{\|g(t)\|}{(1+t^2)^{\frac n4}}<+\infty\, .
\end{equation*}
Setting, $\phi(\xi,t)=\widehat g(\xi,t)$, the Fourier transform of $g(t)$, $\phi$ verifies \eqref{E: otra condici—n}, it solves \eqref{E: otra ecuacion} in the distribution sense and Theorem \ref{T: el teorema1} follows from Theorem \ref{T: Theroem 3}.
\end{proof}
\begin{proof}[Proof of Theorem \ref{T: Theroem 3}] Let $\phi$ verify  \eqref{E: otra condici—n} and solve \eqref{E: otra ecuacion} in the distribution sense. Define 
\begin{equation}\label{E: quejodidadefinicion}
f(\xi,t)=e^{\lambda\cdot\xi-\frac{|\xi|^2(1+t^2)}2}\phi(\xi,t),
\end{equation}
when $\lambda\in\Rn$. $f$ verifies
\begin{equation}\label{E: una ecuacion simplificada}
\partial_tf-\frac\xi{t-i}\cdot\nabla f-\frac{n-\lambda\cdot\xi}{t-i}\,f=0 ,\ \text{in}\ \Rm
\end{equation}
in the sense of distributions. Formally, Lemma \ref{L: freq1} and  Remark \ref{R:1} give that
\begin{equation*}
\mathcal H(t)=\frac{\|f(t)\|^2}{\,(1+t^2)^{\frac n2}}=\frac{\|e^{\lambda\cdot\xi-\frac{|\xi|^2(1+t^2)}2}\phi(t)\|^2}{\,(1+t^2)^{\frac n2}}
\end{equation*}
verifies
\begin{equation*}
\partial_t\left((1+t^2)\partial_t\log{\mathcal H(t)}\right)\ge 0,\ \text{in}\ \R
\end{equation*}
and
\begin{equation}\label{E: la convexidad}
\mathcal H(t)\le \mathcal H(-T)^{\theta(t,T)}\mathcal H(T)^{1-\theta(t,T)},\ \text{when}\ -T\le t\le T.
\end{equation}
Because 
\begin{equation*}
\mathcal H(\pm T)\le e^{\frac{|\lambda|^2}{1+T^2}}\sup_\R{\frac{\|\phi(t)\|^2}{\,(1+t^2)^{\frac n2}}},
\end{equation*}
we get
\begin{equation}\label{E: lotercero}
\frac{\|e^{\lambda\cdot\xi-\frac{|\xi|^2(1+t^2)}2}\phi(t)\|^2}{\,(1+t^2)^{\frac n2}}\le e^{\frac{|\lambda|^2}{1+T^2}}\sup_\R\frac{\|\phi(t)\|^2}{\,(1+t^2)^{\frac n2}},
\end{equation}
when $-T\le t\le T$. Letting $T$ tend to $+\infty$ in \eqref{E: lotercero}
\begin{equation*}
\sup_{\R}{\frac{\|e^{\lambda\xi-\frac{|\xi|^2(1+t^2)}2}\phi(t)\|^2}{\,(1+t^2)^{\frac n2}}}\le \sup_{\R}{\frac{\|\phi(t)\|^2}{(1+t^2)^{\frac n2}}}\, ,
\end{equation*}
 which implies $\phi\equiv 0$, after letting $|\lambda|$ tend to infinity.
 
 To finish the proof we must show that the above claims are correct. In particular, suffices to show that $\phi$ verifies \eqref{E: la convexidad}, when $T>0$ and $t=0$. This can be done with similar arguments to the ones used in \cite[Lemma 4]{ekpv09} and for the reader's convenience we include them here. 
 
 The equation \eqref{E: una ecuacion simplificada} can be written as
\begin{equation*}
\partial_tf-\nabla\cdot\left(\frac\xi{t-i}\, f\right)+\frac{\lambda\cdot\xi}{t-i}\,f=0,\ \text{in}\ \Rm
\end{equation*}
and $f$ verifies
\begin{equation}\label{E: soluciondebil}
\int f(y,s)\left(-\partial_s\Theta+\frac y{s-i}\cdot\nabla\Theta+\frac{\lambda\cdot y}{s-i}\,\Theta\right)\,dyds=0,\ \text{for all}\ \Theta\in C^\infty_0(\Rm,\R).
\end{equation}

 Let $\theta$ in $C^\infty_0(\Rm)$ be a standard mollifier supported in the unit ball of $\Rm$ and for $0<\de\le \frac 14$, set $f_{\delta}(\xi,t)=f\ast\theta_\delta(\xi,t)$ and
\begin{equation*}
\theta_\de(y,s)=\theta^{\xi,t}_\de(y,s)=\delta^{-n-1}\theta(\tfrac{\xi-y}\delta\, ,\tfrac{t-s}\delta)\, .
 \end{equation*}
Then, $f_\de$ is in $C^\infty(\R,\mathcal S(\Rn))$ and
\begin{equation}\label{E: formulalarga}
\begin{aligned}
&\left(\partial_tf_\de-\frac\xi{t-i}\cdot\nabla f_\de-\frac{n-\lambda\cdot\xi}{t-i}\,f_\de \right)(\xi,t)\\
&=\int  f\left[-\partial_s\theta^{\xi,t}_\de+\frac y{s-i}\cdot\nabla\theta^{\xi,t}_\de+\frac{\lambda \cdot y}{s-i}\,\theta^{\xi,t}_\de\right]dyds\\
&+\int f\left[\frac{\lambda\cdot\xi}{t-i}-\frac{\lambda\cdot y}{s-i}\right]\f dyds\\
&+\int f\left[\frac\xi{t-i}-\frac y{s-i}\right]\cdot\nabla\f dyds-\frac n{t-i}\int f\,\f dyds.
\end{aligned}
\end{equation}
The last identity and \eqref{E: soluciondebil} give
\begin{equation}\label{E: controldelerror}
\partial_tf_\de-\frac\xi{t-i}\,\cdot\nabla f_\de-\frac{n-\lambda\cdot\xi}{t-i}\,f_\de=F_\de,
\end{equation}
where $F_{\de}$ is the sum of the last three integrals in \eqref{E: formulalarga}. Moreover, there is  $N_{\lambda}$ such that
\begin{equation}\label{E: otras desigualdades}
|f_\de(\xi,t)|+|\xi f_\de(\xi,t)|+|F_{\de}(\xi,t)|\le  \frac{N_{\lambda}}{\delta^{n+1}}\int_{t-\de}^{t+\de}\int_{B_\delta(\xi)}|\phi|\,dyds,
\end{equation}
when $(\xi,t)\in \Rm$.
From \eqref{E: controldelerror}
\begin{equation*}
(t-i)\partial_tf_\de-\xi\cdot\nabla f_\de=(n-\lambda\cdot\xi)f_\de+(t-i)F_\de
\end{equation*}
and
\begin{equation}\label{E: que jodida formula}
\int\varphi^2(t)|(t-i)\partial_tf_\de-\xi\cdot\nabla f_\de|^2\,d\xi dt=\int\varphi^2(t)|(n-\lambda\cdot\xi)f_\de+(t-i)F_\de|^2\,d\xi dt,
\end{equation}
when $\varphi\in C_0^\infty(\R)$. From \eqref{E: que jodida formula}, the identity
\begin{equation*}
|(t-i)\partial_tf_\de-\xi\cdot\nabla f_\de|^2=|\partial_tf_\de|^2+|t\partial_tf_\de-\xi\cdot\nabla f_\de|^2+i\left(\partial_tf_\de\overline{\xi\cdot\nabla f_\de}-\xi\cdot\nabla f_\de\overline{\partial_tf_\de}\right)
\end{equation*}
and integration by parts, we find that
\begin{equation}\label{E: otra jodidisima formulaII}
\begin{aligned}
&\int\varphi^2\left[|\partial_tf_\de|^2+|t\partial_tf_\de-\xi\cdot\nabla f_\de|^2\right]\,d\xi dt=\int\varphi^2|(n-\lambda\cdot\xi)f_\de+(t-i)F_\de|^2\,d\xi dt\\
&+\int i\varphi\left(2t\varphi '-n\varphi\right)f_\de\overline{\partial_tf_\de}\,d\xi dt+\int 2i\varphi\varphi' f_\de\overline{\left(\xi\cdot\nabla f_\de-t\partial_tf_\de\right)}\,d\xi dt,
\end{aligned}
\end{equation}
and \eqref{E: otra jodidisima formulaII}, \eqref {E: otras desigualdades}, \eqref{E: quejodidadefinicion}
 and \eqref{E: controldelerror} imply that
\begin{equation}\label{E: fiolla}
\|\xi\cdot\nabla f_\de\|_{L^2(\Rn\times [-T,T])}\le N_{T,\lambda}\,\sup_\R\frac{\|\phi(t)\|}{\sqrt[4]{1+t^2}}.
\end{equation}
Clearly, \eqref{E: fiolla} holds with $f$ replacing $f_\de$ and the last two integrals in \eqref{E: formulalarga} can be written as
\begin{equation*}
-\int\left(y\cdot\nabla f+nf\right)\left[\frac 1{t-i}-\frac 1{s-i}\right]\f dyds-\frac 1{t-i}\,f\ast\eta_\de(\xi,t),
\end{equation*}
where 
\begin{equation*}
\eta(\xi,t)=\xi\cdot\nabla\theta(\xi,t)+n\,\theta(\xi,t),\ \eta_\de(\xi,t)=\delta^{-n-1}\eta(\tfrac\xi\de,\tfrac\xi t),
\end{equation*}
 is a mollifier verifying
\begin{equation}\label{E: increible folla}
\int\eta(y,s)\,dy=0,\ \text{for all}\ s\in\R.
\end{equation}
Altogether,
\begin{align*}
F_\de(\xi,t)&=\int f\left[\frac{\lambda\cdot\xi}{t-i}-\frac{\lambda\cdot y}{s-i}\right]\f dyds\\
&-\int\left(y\cdot\nabla f+nf\right)\left[\frac 1{t-i}-\frac 1{s-i}\right]\f dyds-\frac 1{t-i}\,f\ast\eta_\de(\xi,t),
\end{align*}
and now it is simple to verify that
\begin{equation}\label{E: algo fundamen}
\lim_{\delta\to 0}\|F_\de\|_{L^2(\Rn\times [-T,T])}=0,\  \text{when}\ T>0\ \text{and}\ f\in C(\R,L^2(\Rn)).
\end{equation}
Apply now Lemma \ref{L: freq1} to $f_\de$ and recall \eqref{E: algo fundamen}. We get
\begin{equation}\label{E: acotacionfinal}
H_\de(0)\le\frac 1{\,(1+T^2)^{\frac n2}}\left(H_\de(-T)+\e\right)^{\frac 12}\left(H_\de(T)+\e\right)^{\frac 12}e^{N_{T,\de,\e}},\ \text{when}\ \e>0,
\end{equation}
with $H_\de(t)=\|f_\de(t)\|^2$ and
\begin{equation*}
N_{T,\de,\e}\le \e+\tfrac T{\e^2}\|F_\de\|_{L^2(\Rn\times [-T.T])}^2.
\end{equation*}
Letting then $\de$ and $\e$ tend to zero in \eqref{E: acotacionfinal}, \eqref{E: la convexidad} follows for $t=0$.
\end{proof}
\begin{remark}\label{R: Beltrami}
 According to \cite[Chapter 7]{atg} the equation \eqref{E: una ecuacion simplificada} is equivalent for $n=1$ to a degenerate first order elliptic system for the real and imaginary parts of $f$. The system ceases to be elliptic either on the line $\xi=0$, or when $\xi$ is large. Therefore, the elliptic theory implies that $f\in C^\infty(\R^2\setminus\{(\xi,t): \xi=0\})$. Here, we need global estimates for solutions to \eqref{E: una ecuacion simplificada} and this is the reason why we use \eqref{E: otra jodidisima formulaII} and the mollifiers. 
 \end{remark}
  \end{section}
  
 %&&&&

 \begin{section}{The $L^\infty(\Rn)$ version follows from the $L^2(\R)$ one  and related issues}\label{S: 3} 
\begin{proof}[Proof of Theorem \ref{T: 4}] It suffices to prove Theorem \ref{T: 4} when $\alpha=\beta=2$ and
\begin{equation}\label{E: la condicion en Linf}
\|e^{|x|^2/4}h\|_{L^2(\Rn)}+\|e^{|\xi|^2}\widehat h\|_{L^2(\Rn)}<+\infty.
\end{equation}
 Assume first $n=1$. Clearly, $h$ is smooth and if
 \begin{equation}\label{E: mas peque–a formula}
 g(x) = h(x) - h(0) e^{-x^2/4},
 \end{equation}
$g$ also verifies \eqref{E: la condicion en Linf} and is
smooth.  Now consider, 
\begin{equation}\label{E: una peque–a definicion}
f(x) = g(x)/x.
\end{equation}
Obviously, $|f(x)| \le
e^{-x^2/4}/|x|$ and $f$ is continuous in $[-1,1]$.  The Fourier transform of $f$ is, up to a multiple, the convolution of the sign function with $\widehat{g}$. Because $g(0) = 0$, we have
\begin{equation}\label{E: una f simpatica}
\begin{aligned}
\int_{\R} \text{sgn}(\xi - \eta) \widehat{g}(\eta)\, d\eta
&= \int_{\eta < \xi} \widehat{g}(\eta)\, d\eta - \int_{\eta > \xi}
\widehat{g}(\eta)\, d\eta
\\ &= 2\int_{\eta < \xi} \widehat{g}(\eta)\, d\eta - \int_{\R}
\widehat{g}(\eta)\, d\eta
= 2\int_{\eta < \xi} \widehat{g}(\eta)\, d\eta
\end{aligned}
\end{equation}
and there is a similar expression when $\xi >
0$.  If we estimate \eqref{E: una f simpatica} for $\xi <0$, we get
\begin{equation*}
|\int_{\eta < \xi} \widehat{g}(\eta)\, d\eta |
\lesssim \int_{\eta < \xi} e^{-\eta^2}\, d\eta
\lesssim |\xi|^{-1} e^{-\xi^2}.
\end{equation*}
The same inequality holds when $\xi > 0$.  Further, $\widehat{f}$ is continuous in $[-1,1]$.  Thus, $f$ satisfies the hypotheses of
the $L^2(\R)$ version of the Hardy uncertainty principle and so is zero; this proves that $g = 0$ and $h(x) = h(0) e^{-x^2/4}$.

When $h(x,y)$ verifies \eqref{E: la condicion en Linf} in $\R^2$ and
\begin{equation}\label{E: un mŽtodo curioso}
h_\eta(x)=\frac 1{\sqrt{2\pi}} \int_\R e^{-iy\eta}h(x,y)\,dy,
\end{equation}
$h_\eta$ satisfies \eqref{E: la condicion en Linf} in $\R$, with  $\widehat{h_\eta}(\xi)=\widehat h(\xi,\eta)$ and the 1-dimensional Hardy uncertainty principle gives 
\begin{equation}\label{E: casicasi}
\widehat h(\xi,\eta)=e^{-\xi^2}h_2(\eta),
\end{equation}
for some $h_2:\R\longrightarrow\C$. Replacing $x$ by $y$ and $\xi$ by $\eta$ in \eqref{E: un mŽtodo curioso}, we get
\begin{equation}\label{E: casicasicai}
\widehat h(\xi,\eta)=e^{-\eta^2}h_1(\xi),
\end{equation}
for some $h_2:\R\longrightarrow\C$. Clearly, \eqref{E: casicasi} and \eqref{E: casicasicai} show that $\widehat h$ is a constant multiple of $e^{-\xi^2-\eta^2}$ and Theorem \ref{T: 4} follows.

See \cite[Theorem 4]{SiSu} for another real-variable reduction of the $L^\infty(\Rn)$ case, $n\ge 2$, to the $L^\infty(\R)$ case via the Radon transform.
\end{proof}

Let us recall, extended to $\R^n$,  the two variants $(A_1)$ and $(A_2)$ of Hardy uncertainty principle that we considered in the introduction:
\vspace{0.1cm}

$(A_1)$ \emph{If 
$h(x)=O(e^{-|x|^2/\beta^2})$, $\hat h(\xi)=O(e^{-4|\xi|^2/\alpha^2})$ and  $1/\alpha\beta>1/4$, then $h\equiv 0$. Also, if $1/\alpha\beta=1/4$, $h$ is a constant multiple of $e^{-|x|^2/\beta^2}$.}
\vspace{0.1cm}

$(A_2)$ \emph{ If $0<p,\,q\le \infty$, with at least one of them finite,
\begin{equation*}
\|e^{|x|^2/\beta^2}h\|_{L^p(\R^n)}+\|e^{4|\xi|^2/\alpha^2}\widehat h\|_{L^q(\R^n)}<+\infty,
\end{equation*}
and $1/\alpha\beta\ge  1/4$, then $h\equiv 0$.} 
\vspace{0.1cm}

In fact, Hardy proves in $\R$ the following stronger version  of $(A_2)$ \cite{Har}:
\vspace{0.1cm}

$(A_3)$ \emph{If $h:\Rn\longrightarrow\R$ verifies,
\begin{equation*}
h(x)=O((1+|x|^2)^{\frac k2}e^{-|x|^2/\beta^2}),\quad \hat h(\xi)=O((1+|\xi|^2)^{\frac k2}e^{-4|\xi|^2/\alpha^2}),
\end{equation*}
for some $k\ge 1$ and  $1/\alpha\beta>1/4$, then $h\equiv 0$. If $1/\alpha\beta=1/4$, $e^{|x|^2/\beta^2}h(x)$ is a polynomial of degree less than or equal to $k$.}
\vspace{0.2cm}

There is still another possible extension, namely \cite{bonamie}:
\vspace{0.2cm}

$(A_4)$ \emph{If  $\Phi \in \sS'(\R^n)$ (the space of tempered distributions) and 
\[
e^{|\cdot|^2/\alpha}\Phi \in \sS'(\R^n)
\quad\text{and}\quad
e^{\alpha|\cdot|^2/4}\widehat \Phi  \in \sS'(\R^n),
\]
then $\Phi = e^{-|\cdot|^2/\alpha}p$, where $p$ is a polynomial.}
\vspace{0.2cm}

Finally let us write Beurling-H\"ormander's condition in $\R^n$:
\vspace{0.2cm}

$(B)$ \emph{If  $h \in L^1(\R^n)$ and
\[
\int_{\R^n} \int_{\R^n} e^{|x\cdot \xi|} \,|h(x)| \, |\widehat h(\xi)| \,dx \,d\xi < \infty,
\]
then $h = 0$.}
\vspace{0.2cm}

We have the following result.

\begin{theorem}\label{T4}
Conditions ($A_j)$ with $j=1,2,3,4$ are all equivalent. Moreover condition $(B)$ implies $(A_j)$ for any $j.$
\end{theorem}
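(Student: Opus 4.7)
The plan is to establish the equivalences $(A_1) \Leftrightarrow (A_2) \Leftrightarrow (A_3) \Leftrightarrow (A_4)$ via a combination of Gaussian absorption and Gaussian regularization, together with appeals to the $L^2$ and $L^\infty$ versions of the Hardy principle already established (Theorems \ref{T: el teorema1} and \ref{T: 4}); the remaining implication $(B) \Rightarrow (A_j)$ is then obtained by a direct quadratic-form computation. The two recurring technical devices are \emph{Gaussian absorption}, namely the elementary inequality $(1+|x|^2)^{k/2} e^{-|x|^2/\beta^2} \leq C_{k,\epsilon}\, e^{-|x|^2/(\beta(1+\epsilon))^2}$, which trades polynomial factors for an infinitesimal enlargement of the Gaussian parameter; and \emph{Gaussian regularization}, in which convolution $\Phi * G_\delta$ with a narrow Gaussian shifts the spatial and Fourier-side Gaussian parameters by $\delta$ in complementary ways so that their product is unchanged.

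The implications $(A_3) \Rightarrow (A_2) \Rightarrow (A_1)$ and $(A_4) \Rightarrow (A_3)$ away from the endpoint are routine: $L^\infty$-Gaussian bounds are a fortiori $L^p$-Gaussian bounds after a mild enlargement of $\beta$, both of which admit pointwise polynomial-Gaussian majorants, with the strict inequality $\alpha\beta < 4$ preserved by absorption for $\epsilon$ small. At the endpoint, the $L^p$ integrability in $(A_2)$ forces any surviving polynomial factor to vanish, and the $L^\infty$ bound of $(A_1)$ forces it to be constant. The implication $(A_4) \Rightarrow (A_3)$ at the endpoint is direct: the pointwise polynomial-Gaussian bound of $(A_3)$ yields a tempered distribution fitting the $(A_4)$ hypothesis (with $\alpha_{(A_4)} = \beta^2$), so $h = p \cdot e^{-|x|^2/\beta^2}$; the pointwise bound then pins $\deg p \leq k$.

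To close the loop we prove $(A_1) \Rightarrow (A_4)$ via regularization. Given $\Phi \in \sS'(\Rn)$ with $e^{|x|^2/\alpha}\Phi,\, e^{\alpha|\xi|^2/4}\widehat\Phi \in \sS'$, set $\Phi_\delta = \Phi * G_\delta$. Using $\widehat{\Phi_\delta} = \widehat\Phi \cdot \widehat G_\delta$ and Gaussian convolution algebra, together with the $\sS'$ bounds on $\Phi$, one verifies
\[
|\Phi_\delta(x)| \lesssim (1+|x|)^N e^{-|x|^2/(\alpha+\delta)}, \qquad |\widehat{\Phi_\delta}(\xi)| \lesssim (1+|\xi|)^M e^{-(\alpha+\delta)|\xi|^2/4},
\]
where $N$ and $M$ are controlled by a finite number of Schwartz seminorms of $\Phi$ and hence independent of $\delta$. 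The product of Gaussian parameters of $\Phi_\delta$ is preserved at the endpoint value $4$, so $(A_3)$ at the endpoint applies and yields $\Phi_\delta = p_\delta\, e^{-|x|^2/(\alpha+\delta)}$ with $\deg p_\delta \leq \max(N,M) =: K$ uniformly in $\delta$. Passing $\delta \to 0$, the polynomials $p_\delta$ lie in a finite-dimensional space, so a subsequence converges to a polynomial $p$; together with $\Phi_\delta \to \Phi$ in $\sS'$, we conclude $\Phi = p \cdot e^{-|x|^2/\alpha}$. The main obstacle is securing the uniform polynomial-degree bound on $p_\delta$, which demands careful tracking of the polynomial constants through the Gaussian convolution computations, and moreover presupposes the endpoint case of $(A_3)$ itself; the latter is derived by induction on $k$, with the base case $k=0$ being Theorem \ref{T: 4} and the inductive step relying on the same subtract-off-a-Gaussian-multiple and divide-by-a-coordinate reduction used in the proof of Theorem \ref{T: 4}.

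Finally, for $(B) \Rightarrow (A_1)$: if $h$ satisfies the $(A_1)$ hypothesis with $1/\alpha\beta > 1/4$, then
\[
|h(x)|\,|\widehat h(\xi)|\, e^{|x\cdot\xi|} \leq C^2 \exp\bigl(-|x|^2/\beta^2 - 4|\xi|^2/\alpha^2 + |x||\xi|\bigr),
\]
and the quadratic form $a^2/\beta^2 + 4b^2/\alpha^2 - ab$ in $(a,b)$ has determinant $4/(\alpha\beta)^2 - 1/4$, positive precisely when $\alpha\beta < 4$. The integrand therefore decays exponentially, the $(B)$ hypothesis is satisfied, and $h \equiv 0$. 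The endpoint conclusion in $(A_1)$ cannot be read off from $(B)$ directly since the quadratic form degenerates at $\alpha\beta = 4$; it is recovered by combining this strict-case consequence with Theorem \ref{T: 4}, which derives the endpoint $L^\infty$ conclusion from the $L^2$ version of Theorem \ref{T: el teorema1}.
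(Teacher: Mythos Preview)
Your cycle of implications among the $(A_j)$ is close to the paper's: the $T$-operator induction for $(A_1)\Rightarrow(A_3)$ is the same, and your Gaussian regularization toward $(A_4)$ is in the same spirit. The paper, however, avoids your limit $\delta\to 0$ by using a single fixed regularization $\Psi=e^{-|\cdot|^2/4}\bigl(e^{-|\cdot|^2/2}\ast\Phi\bigr)$ and checking directly that $e^{|x|^2/2}\Psi$ and $e^{|\xi|^2/2}\widehat\Psi$ are Gaussians convolved with tempered distributions, hence smooth with polynomial growth. Your limiting version has a small gap: ``the polynomials $p_\delta$ lie in a finite-dimensional space, so a subsequence converges'' requires an a priori bound on the coefficients, which you have not supplied. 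In fact no limit is needed: for a single $\delta>0$, comparing $\widehat{\Phi_\delta}=\widehat\Phi\cdot\widehat G_\delta=e^{-(\alpha+\delta)|\xi|^2/4}\bigl(e^{\alpha|\xi|^2/4}\widehat\Phi\bigr)$ with the Fourier transform of $p_\delta\,e^{-|x|^2/(\alpha+\delta)}$ already forces $e^{\alpha|\xi|^2/4}\widehat\Phi$ to be a polynomial.

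The genuine gap is in $(B)\Rightarrow(A_1)$. Your quadratic-form computation is correct for the strict case $\alpha\beta<4$, but at the endpoint you invoke Theorem~\ref{T: 4}, which \emph{is} $(A_1)$; that is circular, and you have not derived the endpoint conclusion from $(B)$. The paper's argument here is entirely different and uses $(B)$ in an essential way. It proceeds by contraposition: assuming $(A_3)$ fails, take a counterexample $h$ and iterate $T$ to obtain nonzero functions $T^{N+1}h,T^{N+2}h,\dots$, each satisfying the endpoint Gaussian bound with an extra factor $(1+|x|)^{-1}$ on both the space and Fourier sides. These iterates are linearly independent (a dependence relation would force $h$ to be a rational function times a Gaussian, contradicting the decay of $\widehat h$), so a suitable nonzero linear combination $F$ vanishes to arbitrarily high order at the origin; dividing out powers of $x$ gives $|F(x)|\lesssim(1+|x|)^{-M}e^{-|x|^2/2}$ and the analogue for $\widehat F$, with $M$ as large as one likes. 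At the endpoint the exponential $e^{-|x|^2/2-|\xi|^2/2+|x\xi|}$ is only bounded, not integrable, so the extra $(1+|x|)^{-M}(1+|\xi|)^{-M}$ is precisely what makes the Beurling--H\"ormander integral finite while $F\neq 0$, contradicting $(B)$. This construction is the idea your argument is missing.
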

\begin{proof}

The proof of Theorem 2 also gives that $(A_1)$ and $(A_2)$ are equivalent. Let us see that $(A_1)$ is also equivalent to $(A_3)$.
Again it suffices to prove the result when $\alpha=\beta=2$,
\begin{equation}\label{E: con polinomio}
h(x)=O((1+|x|^2)^{\frac k2}e^{-|x|^2/4}),\quad \hat h(\xi)=O((1+|\xi|^2)^{\frac k2}e^{-|\xi|^2}),
\end{equation} 
and to establish first the case $n=1$. Define then $Th=f$ as in \eqref{E: una peque–a definicion}, $Th$ verifies the same as $h$ but with $k$ by replaced by $k-1$ and the result follows by induction on $k$. When $n>1$, the result follows by induction on $n\ge 1$. In particular, when $n=2$ and if $h$ verifies \eqref{E: con polinomio} in $\R^2$, define $h_\eta$ as in \eqref{E: un mŽtodo curioso}. Then, $h_\eta$ verifies \eqref{E: con polinomio} in $\R$ and
\begin{equation}\label{E: a lo que uno llegaria}
\widehat h(\xi,\eta)=\left(\sum_{p=0}^kc_p(\eta)\xi^p\right)e^{-\xi^2}=\left(\sum_{q=0}^kd_q(\xi)\eta^q\right)e^{-\eta^2},
\end{equation}
for some functions $c_p$, $d_q$, $p,q=1,\dots,k$. The later shows that 
\begin{equation*}
e^{\xi^2}d_j(\xi)=\sum_{i=0}^ka_{ij}\xi^i,\ \text{when} \ j=0,\dots,k\ \text{and for some}\ a_{ij}\in\C.
\end{equation*}
Thus, 
\begin{equation}\label{E: otra polinoio}
\widehat h(\xi,\eta)=\left(\sum_{i,j=0}^ka_{ij}\xi^i\eta^j\right)e^{-\xi^2-\eta^2},
\end{equation}
and the growth condition \eqref{E: con polinomio} implies that the polymonial in \eqref{E: otra polinoio} has degree less or equal than $k$. 

Let us now suppose that $(A_3)$ holds, and that $\Phi \in \sS'(\R)$ satisfies the hypotheses of $(A_4)$.
It will be convenient to define $\calP(\R)$ to be the space of all functions of the form $e^{-|\cdot|^2/2}p$, where $p$ is a polynomial on $\R$ with complex coefficients.

Write $\Upsilon_1$ and $\Upsilon_2$ for the tempered distributions $e^{|\cdot|^2/2}\Phi$ and $e^{|\cdot|^2/2} \widehat\Phi$.
Define
\[
\Psi = e^{-|\cdot|^2/4} (e^{-|\cdot|^2/2} * \Phi) .
\]
It is easy to check that $\widehat\Psi =  e^{-|\cdot|^2} * (e^{-|\cdot|^2/2} \widehat \Phi)$, and it follows at once that $\Psi$ and $\widehat\Psi$ are continuous functions.

In the following calculation involving two variables, $x$ and $y$, we write $\Phi_y$ to indicate that the distribution $\Phi$ acts on the functions of $y$ obtained by freezing the $x$ variable.
It is clear that $e^{-(x/2-y)^2} e^{y^2/2} = e^{x^2/4} e^{-(x-y)^2/2}$ for all $x, y \in \R$, and so
\begin{align*}
e^{x^2/2} \Psi(x)
&= e^{x^2/4} (e^{-|\cdot|^2/2} * \Phi) (x) \\
&= e^{x^2/4}  \Phi_y(  e^{-(x-y)^2/2} )\\
&= \Phi_y( e^{x^2/4} e^{-(x-y)^2/2} )\\
&= \Phi_y( e^{-(x/2-y)^2} e^{y^2/2})\\
&= [e^{|\cdot|^2/2} \Phi]_y (e^{-(x/2-y)^2})\\
&= e^{-|\cdot|^2} * \Upsilon_1 (x/2)
\end{align*}
for all $x \in \R$.
Similarly, $e^{\xi^2/2}  e^{-(\xi - \eta)^2} e^{-\eta^2/2} = e^{-2(\xi/2 - \eta)^2} e^{\eta^2/2}$ for all $\xi$ and $\eta$, so
\begin{align*}
e^{\xi^2/2} \widehat\Psi(\xi)
&= e^{\xi^2/2} [e^{-|\cdot|^2} * (e^{-|\cdot|^2/2} \widehat \Phi )](\xi) \\
&= e^{\xi^2/2} (e^{-|\cdot|^2/2} \widehat \Phi )_\eta ( e^{-(\xi - \eta)^2} ) \\
&= e^{\xi^2/2} (\widehat\Phi)_\eta ( e^{-(\xi - \eta)^2} e^{-\eta^2/2} ) \\
&= (\widehat\Phi)_\eta ( e^{\xi^2/2}  e^{-(\xi - \eta)^2} e^{-\eta^2/2} ) \\
&= (\widehat\Phi)_\eta ( e^{-2(\xi/2 - \eta)^2} e^{\eta^2/2} ) \\
&= (e^{|\cdot|^2/2} \widehat\Phi)_\eta ( e^{-2(\xi/2 - \eta)^2} ) \\
&= e^{-2|\cdot|^2} * \Upsilon_2 (\xi/2)
\end{align*}
for all $\xi \in \R$.
It is easy to see that, if $\Upsilon \in \sS'(\R)$ and $\gamma > 0$, then $e^{-\gamma |\cdot|^2} * \Upsilon$ is smooth and grows no faster than a polynomial, and hence $\Psi$ satisfies the hypotheses of $(A_3)$.
Then $\Psi \in \calP(\R)$, and hence $e^{-|\cdot|^2/2} * \Phi = e^{-|\cdot|^2/4} p$, where $p$ is a polynomial, so $e^{-|\cdot|^2/2} \widehat\Phi = e^{-|\cdot|^2} p_1$, where $p_1$ is a polynomial, whence $\widehat\Phi = e^{-|\cdot|^2/2} p_1$ and $\Phi = e^{-|\cdot|^2/2} p_2$, where $p_2$ is a polynomial.
Thus $(A_4)$ holds.
It is evident that if $(A_4)$ holds, then $(A_3)$ holds. Therefore we have proved that $(A_j)$ for $j=1,2,3,4$ are all equivalent.
We conclude our proof of the equivalence by showing that if $(A_3)$ does not hold, then for any $M \in \N$ there exists a nonzero smooth function $F$ such that
\[
|F(x)| \leq C (1 + |x|)^{-M} e^{-|x|^2/2}
\quad\text{and}\quad
|\widehat F(\xi)| \leq C (1 + |\xi|)^{-M} e^{-|\xi|^2/2}
\]
for all $x$ and $\xi$ in $\R$.
The existence of such a function clearly implies that  (B)  fails.

We consider as before $Th=f$ with $f$ given in \eqref{E: una peque–a definicion}. 
If $T^{k+1} h = e^{-|\cdot|^2/2}p$, where $p$ is a polynomial, for some $k \in \N$, then
\[
T^k h(x) - T^k h(0) e^{-|x|^2/2} = x p(x) e^{-|x|^2/2},
\]
for all $x \in \R$, and so $T^{k}h \in \calP(\R)$; iterating this if necessary, we deduce that $h \in \calP(\R)$.
In particular, if $T^kh =0$ for some $k$, then $h \in \calP(\R)$.

Now suppose that $h$ satisfies the hypotheses of $(A_3)$ but not the conclusion.
By iterating $T$, we see that $T^{N+1}h$ satisfies
\begin{equation}\label{star}
|T^{N+1} h(x)| \leq C (1 + |x|)^{-1} e^{-|x|^2/2}
\quad\text{and}\quad
|\widehat {T^{N+1}h}(\xi)| \leq C (1 + |\xi|)^{-1} e^{-|\xi|^2/2} 
\end{equation}
for all $x$ and $\xi$ in $\R$.
But if we iterate further, then we do not improve the estimate, since the sharpest estimate is
\[
|T^{N+1} h(x) - T^{N+1} h(0) e^{-|x|^2}| \leq C e^{-|x|^2/2}.
\]
However, if $h, Th, T^2h, \dots, T^kh$ are linearly dependent, that is, if
\[
\alpha_0 h + \alpha_1 Th + \dots +\alpha_k T^k h=0,
\]
then
\begin{align*}
\alpha_1 h(x)  + \dots + \alpha_k T^k h(x)
&= - \alpha_0 x h(x) + \alpha_1 h(0) e^{-|x|^2/2} + \dots + \alpha_k T^k h(0) e^{-|x|^2/2} \\
&= p_1(x) h(x) + q_1(x) e^{-|x|^2/2}
\end{align*}
for all $x$; unravelling this further, we find that $h$ is the product of a rational function and $e^{-|\cdot|^2/2}$.
Unless the denominator of the rational function is constant, $\widehat h$ does not decay as fast as functions in $\calP(\R)$.
It follows that we can exclude the possibility that $h$, $Th$, \dots, $T^kh$ are linearly dependent.
By taking linear combinations of the functions $T^{N+1}h$, $T^{N+2}h$, \dots, $T^{N+k}h$, where $k$ is a positive integer, we can therefore find nonzero functions $F$ that satisfy \eqref{star} and also satisfy $F(0) = F'(0) = \dots = F^{(M-1)} = 0$.
Then $x \mapsto x^{-M} F(x)$ is nonzero and continuous and satisfies
\begin{equation}\label{bad-fn}
|F(x)| \leq C (1 + |x|)^{-M} e^{-|x|^2/2}
\quad\text{and}\quad
|\widehat F(\xi)| \leq C (1 + |\xi|)^{-M} e^{-|\xi|^2/2} 
\end{equation}
for all $x$ and $\xi$.
\end{proof}
Observe that we have now shown that $(A_j)$ with $j=1,2,3,4$ are equivalent, and that $(B)$ implies all these. It is worth pointing out that $(B)$ is more general that the other variants, since it applies also to compactly supported functions whose Fourier transforms are of exponential type.

\end{section}
\vfill\eject
\pagebreak

\vfill\eject
%%%%%%%%%%%%%%%
%%%%%%%%%%%%%%%%%%%%%%%%%%
\end{document}